 \newtheorem{lemma}{Lemma}
 \newtheorem{theorem}{Theorem}
\begin{document}
\title{Connectivity of direct products of graphs}

\author{Wei Wang \footnote{Corresponding author: wangwei.math@gmail.com}\hspace{1cm}Ni-Ni Xue\\
\small College of Information Engineering, Tarim University, \\
\small Alar, Xinjiang, 843300, P.R.China}
\date{}
 \maketitle

\begin{abstract}
Let  $\kappa(G)$ be the connectivity of $G$ and $G\times H$ the
direct product of $G$ and $H$. We prove that for any graphs $G$ and
$K_n$ with $n\ge 3$, $\kappa(G\times
K_n)=\mbox{min}\{n\kappa(G),(n-1)\delta(G)\}$, which was conjectured
by Guji and Vumar.

\end{abstract}

{Keywords:}  Connectivity; Direct product; Minimum degree

\section{Introduction}

  Throughout this paper we consider only finite undirected graphs
without loops and multiple edges.

  Let $G=(V(G),E(G))$ be a graph. The connectivity of $G$ is the
number, denoted as $\kappa(G)$, equal to the fewest number of
vertices whose removal from $G$ results in a disconnected or trivial
graph. The direct (or Kronecker) product $G\times H$ of graph $G$
and $H$ has vertex set $V(G\times H)=V(G)\times V(H)$ and edge set
$E(G\times H)=\{(u_1,v_1)(u_2,v_2):u_1u_2\in E(G)\ \mbox{and}\
v_1v_2\in E(H)\}$.

  The connectivity of direct products of graphs has been studied
  recently. Unlike the case of Cartesian products where the general
  formula was obtained \cite{spac1,xujm}, results for direct products
  have been given only in special cases.  Mamut and Vumar \cite{mamut} considered product of two complete graphs and proved
  for any $K_m$ and $K_n$ with $n\ge m\ge2$ and $n\ge3$,
  \begin{equation}\label{comcom}\kappa(K_m\times K_n)=(m-1)(n-1).\end{equation}
  Later, Guji and Vumar \cite{guji} proved for any bipartite
  graph $G$ and $K_n$ with $n\ge3$,
  \begin{equation}
  \label{bipcom}\kappa(G\times
  K_n)=\mbox{min}\{n\kappa(G),(n-1)\delta(G)\},
  \end{equation}
  where $\delta(G)$ denoted the minimum degree of $G$. In the same
  paper, Guji and Vumar conjectured (\ref{bipcom}) holds even without the
  assumption of bipartiteness of $G$.

In the next section we shall prove the conjecture.
\section{The result}

\begin{theorem}\label{anycom}
$\kappa(G\times
 K_n)=\textup{min}\{n\kappa(G),(n-1)\delta(G)\}$ for $n\ge3$.
\end{theorem}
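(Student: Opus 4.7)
The plan is to prove the easy upper bound and then to obtain the matching lower bound via a contradiction-based case analysis. For the upper bound, observe that $\kappa(G\times K_n)\le\delta(G\times K_n)=(n-1)\delta(G)$ because every $(u,v)$ has $\deg_G(u)(n-1)$ neighbors in $G\times K_n$, and that for a minimum vertex cut $S_0$ of $G$ the set $S_0\times V(K_n)$ is a vertex cut of $G\times K_n$ of size $n\kappa(G)$: the components of $G-S_0$ lift to vertex sets of $G\times K_n$ between which there are no edges.

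For the lower bound I would assume, for contradiction, that $S\subseteq V(G\times K_n)$ is a vertex cut with $|S|<\min\{n\kappa(G),(n-1)\delta(G)\}$. Choose $A$ a smallest component of $G\times K_n-S$ and $B$ the union of the remaining components, and for each $u\in V(G)$ let $A_u,B_u,S_u\subseteq V(K_n)$ be the fibers in column $\{u\}\times V(K_n)$. Set $V_A=\{u:A_u\ne\emptyset\}$, $V_B=\{u:B_u\ne\emptyset\}$, $T=\{u:S_u=V(K_n)\}$. From $n|T|\le|S|<n\kappa(G)$ we get $|T|<\kappa(G)$, so $G-T$ is connected. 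The key structural lemma is: if $u_1u_2\in E(G)$ with $A_{u_1}\ne\emptyset$ and $B_{u_2}\ne\emptyset$, then $A_{u_1}=B_{u_2}=\{v\}$ for a common $v\in V(K_n)$ (else $v_1\in A_{u_1},v_2\in B_{u_2}$ with $v_1\ne v_2$ give an $A$-to-$B$ edge of $G\times K_n$).

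Split into two cases. If $|V_A|=1$, say $V_A=\{u\}$, then $A\subseteq\{u\}\times V(K_n)$ and every $G\times K_n$-neighbor of an $A$-vertex must lie in $S$, so $|S|\ge(n-1)\delta(G)$ immediately. In the main case $|V_A|\ge 2$, set $F=\{u:|S_u|=n-1\}$, $F_A=F\cap V_A$, $F_B=F\cap V_B$. The structural lemma shows that $T\cup F_A$ separates the \emph{thick} columns $\{u\in V_A\setminus V_B:|A_u|\ge 2\}$ from $V_B$ in $G$, and symmetrically for $T\cup F_B$. When both thick sets are nonempty this yields $|T|+|F_A|\ge\kappa(G)$ and $|T|+|F_B|\ge\kappa(G)$; combined with $|S|\ge n|T|+(n-1)|F|$ a short LP calculation using $n\ge 3$ (minimize $nt+(n-1)f$ under $2t+f\ge 2\kappa(G)$) gives $|S|\ge n\kappa(G)$, a contradiction.

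The main obstacle is the residual configurations where this clean argument fails. If $V_A\cap V_B\ne\emptyset$, a fat vertex $u\in V_A\cap V_B$ (with $|A_u|\ge 2$ or $|B_u|\ge 2$) has the structural lemma force all its $G$-neighbors outside $V_B$ (resp.\ $V_A$), so the $G\times K_n$-neighborhood of a $B$-vertex (resp.\ $A$-vertex) in column $u$ sits entirely in $S$, yielding $|S|\ge(n-1)\delta(G)$; the lean-lean alternative $|A_u|=|B_u|=1$ for all $u\in V_A\cap V_B$ imposes the swap $(A_u,B_u)\leftrightarrow(B_u,A_u)$ along each edge of $G[V_A\cap V_B]$, and one checks that the cut sets $T\cup F_A$, $T\cup F_B$ continue to work. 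The degenerate situation where the thick set of $V_A$ or $V_B$ is empty leaves $A$ (say) as a very thin connected component, and the inequality is recovered by a direct count of the $S$-neighbors of a single $A$-vertex together with the smallest-component property $|A|\le|B|$. These sub-cases are delicate bookkeeping but need no conceptually new idea beyond the structural lemma.
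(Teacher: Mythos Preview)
Your overall strategy---the upper bound, the fibre decomposition, the structural lemma, and the LP in the ``both thick sets nonempty'' case---is sound and is genuinely different from the paper's route.  The paper instead inducts on $|V(G)|$ to eliminate columns entirely contained in $S$, then proves two lemmas: a ``macro'' lemma that the quotient graph $G^*$ (columns collapsed to points) is connected, and a ``micro'' lemma that each column $S'_i$ lies inside a single component of $G\times K_n-S$.  Your structural lemma and the LP essentially reproduce the macro lemma in a cleaner form, and your handling of $T$ inside the LP is more economical than the paper's induction.

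However, the claim that the residual cases are ``delicate bookkeeping but need no conceptually new idea beyond the structural lemma'' is not right; there is a genuine gap in the thin--thin situation.  Consider the configuration $T=\emptyset$, $V_A=V_B=V(G)$, $|A_u|=|B_u|=1$ for every $u$.  Your swap observation then forces $\{a_u,b_u\}$ to be constant along edges and $u\mapsto a_u$ to be a proper $2$-colouring of $G$, so $G$ is bipartite and $|S|=(n-2)m$.  In this configuration the ``direct count of the $S$-neighbours of a single $A$-vertex'' yields only
\[
|S|\;\ge\;(n-1)\deg_G(u)-\deg_A(u)\;\ge\;(n-2)\delta(G),
\]
which for $n=3$ is just $\delta(G)$, not $(n-1)\delta(G)$; and since $|A|=|B|=m$, the smallest-component inequality $|A|\le|B|$ contributes nothing.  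What actually closes this case is the \emph{bipartiteness} just derived: it gives $\delta(G)\le m/2$, whence $(n-2)m\ge(n-1)\delta(G)$ for $n\ge 3$.  More generally, when all the $A$-columns are thin one has to argue that either the relevant induced subgraph contains an odd cycle (which lets you merge $A$ into a larger component) or it is bipartite (which gives $\delta\le r/2$ and then $|S|\ge(n-2)r+(n-1)|Z^*|\ge(n-1)\delta(G)$).  This odd-cycle/bipartite dichotomy is precisely the content of the paper's Lemma~3, Subcase~2.2, and it is an additional idea beyond the structural lemma, not mere bookkeeping.
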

The proof of the theorem will be postponed to the end of this
section. We first give some properties on direct products of graphs
\cite{bott}.

\begin{lemma}\label{basic}

\textup{(1)} The direct product of nontrivial graphs $G$ and $H$ is
connected if and only if both factors are connected and at least one
factor contains an odd cycle.

\textup{(2)} $\delta(G\times H)=\delta(G)\delta(H)$, and in
particular, $\delta(G\times K_n)=(n-1)\delta(G)$.

\end{lemma}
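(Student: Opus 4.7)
The plan is to treat the two parts separately. Part (2) is a direct degree computation, while part (1) is Weichsel's classical theorem, which I would establish by a walk-length argument.

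For (2), fix a vertex $(u,v)\in V(G\times H)$. By the definition of the direct product, $(u,v)(u',v')$ is an edge precisely when $uu'\in E(G)$ and $vv'\in E(H)$, so the neighborhood of $(u,v)$ is $N_G(u)\times N_H(v)$, and hence $\deg_{G\times H}(u,v)=\deg_G(u)\cdot\deg_H(v)$. Minimizing over all $(u,v)$ separates into minimizing each factor, giving $\delta(G\times H)=\delta(G)\delta(H)$. Setting $H=K_n$, where every vertex has degree $n-1$, yields the stated specialization $\delta(G\times K_n)=(n-1)\delta(G)$.

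For (1), I would first handle necessity. Any walk $(u_0,v_0),(u_1,v_1),\dots,(u_k,v_k)$ in $G\times H$ projects coordinatewise to walks $u_0u_1\cdots u_k$ in $G$ and $v_0v_1\cdots v_k$ in $H$ of the same length, so connectedness of $G\times H$ forces both factors to be connected. Suppose for contradiction that neither $G$ nor $H$ contained an odd cycle; then both are bipartite, with bipartitions $V(G)=A\cup B$ and $V(H)=C\cup D$. Every edge of $G\times H$ joins a vertex of $A\times C$ to one of $B\times D$, or a vertex of $A\times D$ to one of $B\times C$. Hence the sets $(A\times C)\cup(B\times D)$ and $(A\times D)\cup(B\times C)$ partition $V(G\times H)$ into two nonempty parts with no edges between them, contradicting connectivity.

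For sufficiency, assume both factors are connected and, without loss of generality, that $G$ contains an odd cycle. Given vertices $(u_1,v_1)$ and $(u_2,v_2)$ of $G\times H$, producing a walk between them in the product is equivalent to exhibiting walks $u_1\to u_2$ in $G$ and $v_1\to v_2$ in $H$ of the \emph{same} length. Combining a shortest $u_1u_2$-path in $G$ with a detour that reaches the odd cycle, traverses it once, and returns, I obtain $u_1u_2$-walks of both parities in $G$; by repeatedly inserting back-and-forth traversals of a single edge, $G$ then admits such walks of every sufficiently large length. Connectedness of $H$ likewise supplies $v_1v_2$-walks of every sufficiently large length of at least one parity. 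Matching a suitable common length yields the desired walk in $G\times H$. The crux of the argument, and the step where the hypothesis is used in an essential way, is the parity manipulation in $G$; this is exactly what the odd cycle is there to supply.
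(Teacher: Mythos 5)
Your proposal is correct. Note, however, that the paper does not prove Lemma \ref{basic} at all: it is quoted as a known result with a citation to Bottreou and M\'etivier (and part (1) is Weichsel's classical theorem), so there is no in-paper argument to compare against. What you supply is a self-contained proof, and it is the standard one. Part (2) is right: the neighbourhood of $(u,v)$ is exactly $N_G(u)\times N_H(v)$ (loop-freeness guarantees these are genuine neighbours), and since degrees are nonnegative the minimum of the product $\deg_G(u)\deg_H(v)$ factors as the product of the minima. For part (1), the projection argument gives connectivity of the factors, the bipartition $(A\times C)\cup(B\times D)$ versus $(A\times D)\cup(B\times C)$ correctly disconnects the product when both factors are bipartite (both parts are nonempty because a nontrivial connected bipartite factor has both colour classes nonempty), and the sufficiency direction via walks of matching length --- using the odd cycle to flip parity in $G$ and edge back-and-forths to realize all sufficiently large lengths of a given parity --- is exactly the classical argument; the only hypotheses you quietly use, that every vertex of a nontrivial connected graph lies on an edge so that back-and-forth padding is available, are satisfied here. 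The net effect of your route is to make the paper self-contained at the cost of about a page, whereas the paper treats the lemma as background; either choice is legitimate.
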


We shall always label
$V(G)=\{u_1,\ldots,u_m\},V(K_n)=\{v_1,\ldots,v_n\}$ and set
$S_i=\{u_i\}\times V(K_n)$.   Let $S\subseteq V(G\times K_n)$
satisfy the following two   conditions:\\
(1). $|S|<\mbox{min}\{n\kappa(G),(n-1)\delta(G)\}$, and\\
(2). $S'_i:=S_i-S\ne\emptyset$, for $i=1,2,\ldots,m$.

  Associated with  $G,K_n$ and $S$, we define a new graph $G^*$ as follows:\\
  (1). $V(G^*)=\{S'_1,S'_2,\ldots,S'_m\}$, and\\
  (2). $E(G^*)=\{S'_iS'_j: E(S'_i,S'_j)\ne\emptyset\}$,
  where $E(S'_i,S'_j)$ denotes the collec\-tion of all edges in $(G\times K_n-S)$ with one end in $S'_i$ and the
  other in $S'_j$.

  Notice $G^*$ can  be defined only if $\kappa(G)>0$ since otherwise condition
  (1) is meaningless.

\begin{lemma}\label{macro}
If $G$ is connected then $G^*$ is connected.
\end{lemma}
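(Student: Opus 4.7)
The approach is by contradiction. Suppose $G^*$ is disconnected; choose a partition $V(G^*)=\mathcal{A}\sqcup\mathcal{B}$ with no $G^*$-edges between the parts, and let $I=\{i:S'_i\in\mathcal{A}\}$, $J=\{j:S'_j\in\mathcal{B}\}$, which partitions $V(G)$ into two nonempty sets. Since $G$ is connected, some edge $u_iu_j\in E(G)$ has $i\in I$ and $j\in J$.

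The key structural observation is that for every such $I$-$J$ edge $u_iu_j$, we must have $|S'_i|=|S'_j|=1$ and the unique elements of $S'_i,S'_j$ share the same second coordinate. Otherwise one could pick $(u_i,v_k)\in S'_i$ and $(u_j,v_\ell)\in S'_j$ with $k\neq\ell$, producing an edge of $G\times K_n-S$ between $S'_i$ and $S'_j$, hence a forbidden $G^*$-edge between $\mathcal{A}$ and $\mathcal{B}$. Define $V^{(1)}=\{u\in V(G):|S'_u|=1\}$; each such vertex contributes exactly $n-1$ elements to $S$, so $(n-1)|V^{(1)}|\leq|S|<(n-1)\delta(G)$, giving $|V^{(1)}|<\delta(G)$. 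Moreover, the observation shows that every $I$-$J$ edge of $G$ lies entirely inside $V^{(1)}$.

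I then split into two cases. If $I\subseteq V^{(1)}$ (or symmetrically $J\subseteq V^{(1)}$), pick any $u\in I$. At most $|I|-1$ of its $\geq\delta(G)$ neighbors lie in $I$, so at least $\delta(G)-|I|+1$ lie in $J$; by the observation these are all in $V^{(1)}\cap J$. Therefore $|V^{(1)}|\geq |I|+(\delta(G)-|I|+1)=\delta(G)+1$, forcing $|S|\geq (n-1)(\delta(G)+1)>(n-1)\delta(G)$, contradicting the hypothesis on $|S|$.

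In the remaining case, both $I\setminus V^{(1)}$ and $J\setminus V^{(1)}$ are nonempty. Since all $I$-$J$ edges of $G$ are inside $V^{(1)}$, the set $V^{(1)}$ is a vertex cut of $G$ separating these two nonempty sets. Fix $u_0\in I\setminus V^{(1)}$, $u_1\in J\setminus V^{(1)}$; by Menger's theorem there exist $\kappa(G)$ internally vertex-disjoint $u_0$-$u_1$ paths in $G$. Each such path must cross from $I$ to $J$ via an edge whose \emph{two} endpoints both lie in $V^{(1)}$, and both are internal to the path since $u_0,u_1\notin V^{(1)}$. Internal disjointness then gives $|V^{(1)}|\geq 2\kappa(G)$, so $|S|\geq (n-1)|V^{(1)}|\geq 2(n-1)\kappa(G)\geq n\kappa(G)$ whenever $n\geq 2$, contradicting $|S|<n\kappa(G)$. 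The main obstacle is exactly this Menger step: the factor $2$ (rather than $1$) in $|V^{(1)}|\geq 2\kappa(G)$ is precisely what closes the gap between $n-1$ and $n$, and it hinges on the strong form of the structural observation saying that \emph{both} endpoints of every crossing edge are forced into $V^{(1)}$.
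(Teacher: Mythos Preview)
Your proof is correct and follows essentially the same strategy as the paper: contradict by partitioning $V(G^*)$, observe that every crossing $G$-edge has both endpoints in the set of indices $i$ with $|S'_i|=1$, then split into the two cases and derive $|S|\geq(n-1)\delta(G)$ or $|S|\geq n\kappa(G)$. The only notable difference is in the second case: the paper simply notes that $I\cap V^{(1)}$ and $J\cap V^{(1)}$ are each separating sets of $G$ (since removing either disconnects $I\setminus V^{(1)}$ from $J\setminus V^{(1)}$), immediately yielding $\kappa(G)\le\min\{|I\cap V^{(1)}|,|J\cap V^{(1)}|\}\le |V^{(1)}|/2$, whereas you route through Menger's theorem to reach the same inequality $|V^{(1)}|\ge 2\kappa(G)$. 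Both work, but the direct cut argument is lighter and avoids the (harmless here, since $u_0u_1\notin E(G)$ by your observation) worry about whether Menger yields $\kappa(G)$ paths for adjacent endpoints.
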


\begin{proof}

Suppose $G^*$ is not connected. Then the vertices of $G^*$ can be
partitioned into two parts, $X^*$ and $Y^*$, such that there are no
edges joining a vertex in $X^*$ and a vertex in $Y^*$.  Let
 $r=|X^*|$. Without loss of generality, we may assume $
X^*=\{S'_1,\ldots,S'_r\}$ and $Y^*=\{S'_{r+1},\ldots,S'_m\}$.

Let $X=\{u_1,\ldots,u_r\}$ and $Y=\{u_{r+1},\ldots,u_m\}$. Since $G$
 is connected, there is at least one edge joining a vertex in $X$ and
a vertex in $Y$. Let $Z$ be the collection of ends of all edges in
$E(X,Y)$.

Let $Z^*=\{S'_j:j\in\{1,\ldots,m\}\  \mbox{and}\ |S'_j|=1\}$. For
each $u_i\in Z$, by the definition of $Z$, there is an edge
$u_iu_j\in E(X,Y)$. It follows that both $S'_i$ and $S'_j$ contains
exactly one element since otherwise  $E(S'_i,S'_j)$ contains at
least one edge by the definition of $G\times K_n$. Therefore
$S'_i\in Z^*$ and we have $|Z|\le|Z^*|$. We need to consider two
cases:

\textbf{Case 1:} Either $X\subseteq Z$ or $Y\subseteq Z$. We may
assume $X\subseteq Z$, then the degree of any vertex $u_i\in X$ can
not exceed $|Z|-1$. Therefore  $\delta(G)\le|Z|-1$. By a simple
calculation, we have\\
\begin{displaymath}
|S|\ge(n-1)|Z^*|\ge(n-1)|Z|>(n-1)\delta(G)\ge\mbox{min}\{n\kappa(G),(n-1)\delta(G)\},
\end{displaymath}\\
a contradiction.

\textbf{Case 2:} $X\not\subseteq Z$ and $Y\not\subseteq Z$. Either
of $X\cap Z$ and $Y\cap Z$ is a separating set. Therefore,
$\kappa(G)\le\mbox{min}\{|X\cap Z|,|Y\cap
Z|\}\le|Z|/2$. Similarly, we have\\
\begin{displaymath}
|S|\ge(n-1)|Z^*|\ge(n-1)|Z|>|Z|n/2\ge
n\kappa(G)\ge\mbox{min}\{n\kappa(G),(n-1)\delta(G)\},
\end{displaymath}\\
again a contradiction.
\end{proof}

While the above lemma tells us that the new graph $G^*$ is
connected, what we most concern is the connectedness of $G\times
K_n-S$. We need the following lemma.

\begin{lemma}\label{micro}
Any vertex of $G^*$, $S'_i$, as a subset of $V(G\times K_n-S)$, is
contained in the vertex set of some component of $G\times K_n-S$.
\end{lemma}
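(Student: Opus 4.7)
To show that any two distinct vertices $x = (u_i, v_a)$ and $y = (u_i, v_b)$ of $S'_i$ lie in a common component of $H := G \times K_n - S$, I begin by searching for a length-$2$ path $x - (u_j, v_c) - y$ with $u_j \in N_G(u_i)$ and $v_c \in V(K_n) \setminus \{v_a, v_b\}$. The $\deg_G(u_i)(n-2) \ge \delta(G)(n-2)$ candidate midpoints are precisely the common neighbors of $x$ and $y$ in $G \times K_n$, and the path works whenever one of them lies outside $S$. So the only case requiring real work is the one in which $N_G(u_i) \times (V(K_n) \setminus \{v_a, v_b\}) \subseteq S$.

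In this case $|S| \ge \deg(u_i)(n-2)$, while $|S| < (n-1)\delta(G) \le (n-1)\deg(u_i)$, so the slack $|S| - \deg(u_i)(n-2)$ is strictly less than $\deg(u_i)$. Writing $A := \{u_j : (u_j, v_a) \in S\}$ and $B := \{u_j : (u_j, v_b) \in S\}$, this slack bounds $|N(u_i) \cap (A \cup B)|$, so I can pick $u_j \in N(u_i) \setminus (A \cup B)$. The edges $x\,(u_j, v_b)$ and $y\,(u_j, v_a)$ then both lie in $H$, reducing the problem to joining $(u_j, v_a)$ and $(u_j, v_b)$ inside $H$.

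Iterating this reduction, I collect the set $W \subseteq V(G)$ of all vertices $w$ for which $(w, v_a)$ and $(w, v_b)$ both lie in $V(H)$ but in distinct components of $H$. Each $w \in W$ inherits the constraint $N_G(w) \times (V(K_n) \setminus \{v_a, v_b\}) \subseteq S$, and a further counting argument inside the row $S_w$ forces $|S'_w| = 2$. Hence $N_G[W] \times (V(K_n) \setminus \{v_a, v_b\}) \subseteq S$, giving $|S| \ge (n-2)\,|N_G[W]|$. The closure property of $W$ under passing to a defective neighbor then forces either $A \cup B$ to strictly separate $W$ from some other vertex in $G$, so that $|A| + |B| \ge \kappa(G)$, or $V(G) = W \cup A \cup B$, in which case the middle rows $v_c$ ($c \ne a, b$) are entirely deleted and $H$ reduces essentially to a subgraph of the bipartite double cover $G \times K_2$. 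A case analysis combining these structural facts with the two inequalities $|S| < n\kappa(G)$ and $|S| < (n-1)\delta(G)$ should then yield the desired contradiction.

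\noindent\textbf{Main obstacle.} The delicate step is exactly this final contradiction: one has to balance the lower bounds on $|N_G[W]|$, $|A|+|B|$, and $|W|$ against both upper bounds on $|S|$ simultaneously, and the borderline scenario $V(G) = W \cup A \cup B$ requires a separate structural argument inside a subgraph of $G \times K_2$, where the hypothesis $n \ge 3$ is essential.
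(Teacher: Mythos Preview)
Your argument is sound up to the point you explicitly flag as the obstacle, but the proposal stops exactly where the real content of the lemma lies, so as written it is incomplete rather than merely sketchy.

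The case distinction you set up at the end (either $A\cup B$ separates $W$ from the rest of $G$, or $V(G)=W\cup A\cup B$) turns out to be a red herring: the bound $|A|+|B|\ge\kappa(G)$ by itself, combined only with $|N_G[W]|\ge\delta(G)+1$, does \emph{not} force $|S|\ge\min\{n\kappa(G),(n-1)\delta(G)\}$ in general (try $n=3$ with $\kappa(G)$ small and $\delta(G)$ large). What actually closes the argument is a parity fact about $G[W]$ that you never state: if $(u_i,v_a)$ and $(u_i,v_b)$ lie in different components of $H$, then $G[W]$ contains no odd cycle. Indeed, along any odd closed walk $w_1w_2\cdots w_{2k+1}w_1$ in $G[W]$ the edges $(w_j,v_a)(w_{j+1},v_b)$ all lie in $H$ (each $w_j\in W$ means both relevant vertices avoid $S$), and traversing them produces a walk from $(w_1,v_a)$ to $(w_1,v_b)$, contradicting $w_1\in W$. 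Hence $G[W]$ is bipartite (or a single vertex), so some $w\in W$ has $\deg_{G[W]}(w)\le|W|/2$; since $N_G(w)\setminus W\subseteq D:=N(W)\setminus W\subseteq A\cup B$, this gives $\delta(G)\le |W|/2+|D|$. Now the count you already have,
\[
|S|\ \ge\ (n-2)|N[W]|+|A|+|B|\ \ge\ (n-2)|W|+(n-1)|D|,
\]
together with $2(n-2)\ge n-1$ for $n\ge3$, yields $|S|\ge (n-1)(|W|/2+|D|)\ge(n-1)\delta(G)$, the desired contradiction --- uniformly, with no need to split on whether $A\cup B$ is a cut.

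This is in fact the same endgame as the paper's proof, which is organised differently: it first disposes of $|S'_i|\ge 3$ by a one-line component argument (any surviving neighbour of a vertex of $S'_i$ is adjacent to all but one element of $S'_i$), and then, for $|S'_i|=2$, works inside the component $C^*$ of $G^*-Z^*$ containing $S'_i$, where $Z^*=\{S'_j:|S'_j|=1\}$. The role of your set $W$ is played there by $\{u_j:S'_j\in V(C^*)\}$, the role of $D$ by $Z^*$, and the odd-cycle/bipartite dichotomy together with the inequality $(n-2)r+(n-1)|Z^*|\ge(n-1)(r/2+|Z^*|)$ appears explicitly. Your iterative reduction reaches the same structure by a more hands-on route; the ingredient you were missing is precisely this parity observation.
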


\begin{proof}
It suffices to prove the lemma for $i=1$.

If $|S'_1|=1$, then the assertion holds trivially. We need to
consider two cases:

\textbf{Case 1:} $|S'_1|\ge3$.  Assume $S'_1$ is not contained in
any component of $G\times K_n-S$. Then there must exist a component
$C$ such that $0<|S'_1\cap V(C)|\le |S'_1|/2<|S'_1|-1$. Let
$(u_1,v_s)\in S'_1\cap V(C)$ be any vertex. Since
$|S|<(n-1)\delta(G)$, it follows by (2) of lemma \ref{basic} that
$(u_1,v_s)$ has at least one adjacent vertex in $G\times K_n-S$. Let
$(u_j,v_p)$ be an adjacent vertex of $(u_1,v_s)$. Clearly,
$(u_j,v_p)\in V(C)$ and $S'_1-\{(u_1,v_p)\}\subseteq V(C)$ since
every vertex in $S'_1-\{(u_1,v_p)\}$ is adjacent to $(u_j,v_p)$. It
follows $|S'_1\cap V(C)|\ge |S'_1|-1$,  a contradiction.

\textbf{Case 2:} $|S'_1|=2$. Let $Z^*=\{S'_j:j\in\{1,\ldots,m\} \
\mbox{and}\ |S'_j|=1\}$  and $C^*$ be a component of $G^*-Z^*$
containing $S'_1$. Let $r$ be the order of $C^*$. Without loss of
generality, we may assume $V(C^*)=\{S'_1,\ldots,S'_r\}$.

Since each $S'_j\in V(C^*)$ contains at least two elements, any edge
 $S'_kS'_j$ in $ C^*$ implies every vertex in $S'_k$ has at least one
adjacent vertex in $S'_j$. Therefore, if there is a vertex $S'_j$ in
$C^*$ contained in the vertex set of some component $C$ of $G\times
K_n-S$, then every $S'_k$ is contained in $V(C)$ provided
$S'_kS'_j\in E(C^*)$. It follows by the connectedness of $C^*$ that
$\cup_{i=1}^{r}S'_i\subseteq V(C)$ and hence $S'_1\subseteq V(C)$.

By case1, we may assume each $S'_j\in V(C^*)$ contains exactly two
elements. Let $S'_j=\{u_j\}\times F_j,j=1,\ldots,r$.

 \textbf{Subcase 2.1:}  There exists an edge $S'_jS'_k$ in $C^*$ with
 $F_j\ne F_k$. One easily verify that $S'_j\cup S'_k$ induces a
 connected subgraph of $G\times K_n-S$. The lemma follows.

 \textbf{Subcase 2.2:}  There exists no edge $S'_jS'_k$ in $C^*$ with
 $F_j\ne F_k$. By the connectedness of $C^*$, all $F_j$ in $C^*$ are equal.
 Notice that $C^*$ and the subgraph induced by $\cup_{i=1}^{r}S'_i$ are
 isomorphic to $G[u_1,\ldots,u_r]$ and $G[u_1,\ldots,u_r]\times
 K_2$, respectively. We claim  $G[u_1,\ldots,u_r]$  must contain an odd
 cycle, which will finish our proof by (1) of lemma \ref{basic}.

 Suppose $G[u_1,\ldots,u_r]$ does not contain an odd cycle. Then
 either $r=1$ or $G[u_1,\ldots,u_r]$ is bipartite. Either of the two
 cases implies $\delta(G[u_1,\ldots,u_r])\le r/2$. Let
 $j\in\{1,\ldots,r\}$ such that
 $\mbox{deg}_{G[u_1,\ldots,u_r]}(u_j)=\delta(G[u_1,\ldots,u_r])$.

 Let $u_k$ be any adjacent vertex of $u_j$ in $G$, then either $S'_k\in Z^*$, or $S'_k$ is an adjacent vertex of $S'_j$ in
 $C^*$.  Therefore,
  \begin{equation}\label{del}
 \delta(G)\le \mbox{deg}_G(u_j)=\mbox{deg}_{C^*}(S'_j)+|Z^*|=\mbox{deg}_{G[u_1,\ldots,u_r]}(u_j)+|Z^*|\le r/2+|Z^*|.
 \end{equation}
 By a simple calculation,
 \begin{equation}\label{s}
 |S|\ge (n-2)r+(n-1)|Z^*|\ge(n-1)(\frac{r}{2}+|Z^*|).
 \end{equation}
 From (\ref{del}) and (\ref{s}), we obtain
 \begin{equation}\label{con}
 |S|\ge(n-1)\delta(G),
 \end{equation}
 a contradiction.

\end{proof}

\begin{lemma}\label{red}
Let $m=|G|\ge2$ and $u_i$ be any vertex of $G$. Then\\
\textup{(1)}. $\delta(G-u_i)\ge\delta(G)-1$, and \\
\textup{(2)}. $\kappa(G-u_i)\ge\kappa(G)-1$.
\end{lemma}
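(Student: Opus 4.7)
The plan is to prove each part by a direct appeal to definitions; the underlying principle is that deleting a single vertex can shrink any degree or any separating set by at most one.

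For (1), I would simply count degrees. For every $v \in V(G) \setminus \{u_i\}$, the only edge that could possibly disappear upon deleting $u_i$ is the edge $u_iv$, so $\deg_{G - u_i}(v) \ge \deg_G(v) - 1 \ge \delta(G) - 1$. Taking the minimum over all such $v$ yields $\delta(G - u_i) \ge \delta(G) - 1$. This is a one-line computation.

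For (2), the strategy is to lift a minimum vertex cut of $G - u_i$ to a separating/trivializing set of $G$ by adjoining $u_i$. Concretely, pick $T \subseteq V(G) \setminus \{u_i\}$ with $|T| = \kappa(G - u_i)$ such that $(G - u_i) - T$ is disconnected or trivial (taking $T = \emptyset$ if $G - u_i$ already has this property). Since $G - (T \cup \{u_i\}) = (G - u_i) - T$, the set $T \cup \{u_i\}$ witnesses disconnection or triviality of $G$, so $\kappa(G) \le |T| + 1 = \kappa(G - u_i) + 1$, which rearranges to the claim.

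The only subtlety is the boundary case $m = 2$, where $G - u_i$ is a single vertex and one must invoke the convention $\kappa(K_1) = 0$; but then $\kappa(G) \le m - 1 = 1$ anyway, so $\kappa(G - u_i) = 0 \ge \kappa(G) - 1$ still holds. I do not foresee any real obstacle beyond keeping these conventions straight; the lemma is a standard warm-up fact about vertex deletion.
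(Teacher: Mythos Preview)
Your argument is correct in both parts. The paper itself states this lemma without proof, treating it as a standard elementary fact, so there is no original argument to compare against; your direct verification from the definitions (degree drop of at most one, and adjoining $u_i$ to a minimum cut of $G-u_i$) is precisely the expected routine justification.
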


\textbf{Proof of Theorem \ref{anycom}}. We apply induction on
$m=|G|$. It trivially holds when $m=1$. We therefore assume $m\ge2$
and that the result holds for all graphs of order $m-1$.

It is clear $\kappa(G\times K_n)\le
\mbox{min}\{n\kappa(G),(n-1)\delta(G)\}$ by lemma \ref{basic}. The
nontrivial part of the proof is hence to show the other inequality.
We may assume $\kappa(G)>0$. Let $S\subseteq V(G\times K_n)$ satisfy
condition (1), i.e., $|S|<\mbox{min}\{n\kappa(G),(n-1)\delta(G)\}$.

\textbf{Case 1:} $S$ satisfies condition (2). It follows by lemma
\ref{macro} and lemma \ref{micro} that $(G\times K_n-S)$ is
connected.

\textbf{Case 2:} $S$ does not satisfy condition (2). Then there
exists an $S_i$ contained in $S$. Therefore, $S-S_i\subseteq
V((G-u_i)\times K_n)$ and

\begin{eqnarray*}
 |S-S_i|&=&|S|-n\\
       &<&\mbox{min}\{n\kappa(G),(n-1)\delta(G)\}-n\\
       &\le&\mbox{min}\{n(\kappa(G)-1),(n-1)(\delta(G)-1)\}\\
       &\le&\mbox{min}\{n\kappa(G-u_i),(n-1)\delta(G-u_i)\}.
\end{eqnarray*}
the last inequality above follows from lemma \ref{red}.

By the induction assumption,
\begin{displaymath}
\kappa((G-u_i)\times
K_n)=\mbox{min}\{n\kappa(G-u_i),(n-1)\delta(G-u_i)\}.
\end{displaymath}
Hence, $(G-u_i)\times K_n-(S-S_i)$ is connected. It follows
by isomorphism that $G\times K_n-S$ is connected.

Either of the two cases implies $(G\times K_n-S)$ is connected.
Thus,
\begin{displaymath}
\kappa(G\times K_n)\ge \mbox{min}\{n\kappa(G),(n-1)\delta(G)\}.
\end{displaymath}
The proof of the theorem is completed by induction.
\subsection*{Acknowledgement}
The authors are much indebted to the referee for his/her valuable
suggestions and corrections that improved the initial version of
this paper.

\end{document}